\DeclareMathOperator*{\bd}{bd}
\DeclareMathOperator{\grad}{grad}
\begin{document}
\title[Cooperative Systems with First Integral]{Cooperative
Irreducible Systems
\\
of Ordinary Differential Equations
\\
with First Integral}

\author{Janusz Mierczy\'nski}

\address{Institute of Mathematics, Wroc{\l}aw University of
Technology, Wybrze\.ze Wyspia\'nskiego 27, PL-50-370 Wroc{\l}aw,
Poland}

\dedicatory{Proceedings of the Second Marrakesh International
Conference on Differential Equations, 1995}

\begin{abstract}
This note considers cooperative irreducible systems of ordinary
differential equations admitting a $C^{1}$ first integral with
positive gradient.  We prove that all forward or backward
nonwandering points are equilibria.  We obtain also some results on
the global phase portrait of such systems.  The main tool of proof is
a canonically defined Finsler structure with respect to which the
derivative skew-product dynamical system is contractive.
\end{abstract}

\keywords{Cooperative system of ordinary differential equations.
First integral.  Positive gradient. Nonwandering point.}

\date{}

\maketitle

\newcommand{\Id}{\ensuremath{\mathrm{Id}}}
\newcommand{\reals}{\ensuremath{\mathbb{R}}}
\newcommand{\Rn}{\ensuremath{{\mathbb{R}}^{n}}}
\newcommand{\Rnmin}{\ensuremath{{\mathbb{R}}^{n-1}}}
\newcommand{\Rnplus}{\ensuremath{{\mathbb{R}}^{n}_{+}}}

\newtheorem{theorem}{Theorem}
\newtheorem{lemma}[theorem]{Lemma}
\newtheorem{remark}[theorem]{Remark}
\newtheorem{corollary}[theorem]{Corollary}
\renewcommand{\theequation}{\arabic{equation}}

\section{Introduction}
A system of ordinary differential equations (ODEs)
\begin{equation}
\label{eq}
\dot{x}^{i} = f^{i}(x), \qquad x = (x^{1}, \dots, x^{n}),
\end{equation}
where $f = (f^{1}, \dots, f^{n}) \colon X \to \Rn$ is a $C^{1}$
vector field on an open set $X \subset \Rn$, is called {\em
cooperative\/} if $({\partial}f^{i}/{\partial}x^{j})(x) \ge 0$  for
$i \ne j$ and all $x \in X$.

\smallskip
Let $\phi(t;x_{0})$ denote the nonextendible solution of system
(\ref{eq}) with the initial condition $\phi(0;x_{0}) = x_{0}$. We
write $\phi_{t}x_{0}$ instead of $\phi(t;x_{0})$.  For each $x \in X$
the mapping $t \mapsto \phi_{t}x$ (called the {\em trajectory\/} of
$x$) is defined on an open interval $(\sigma(x),\tau(x))$ containing
$0$. The restriction of the trajectory of $x$ to $(\sigma(x),0]$
[resp.~to $[0,\tau(x))$] is called the {\em backward\/} [resp.~{\em
forward\/}] {\em semitrajectory\/} of $x$.  The images of
(semi)trajectories are referred to as {\em
\textup{(}semi\textup{)}orbits\/}.  We say $x \in X$ is an {\em
equilibrium\/} if $\phi_{t}x = x$ for all $t \in \reals$, or,
equivalently, if $f(x) = 0$.  A point $y \in X$ is an {\em
$\omega$-limit point\/} of $x\in X$ if there is a sequence $t_{k} \to
\infty$ as $k \to \infty$, such that $\lim_{k\to\infty} \phi_{t_{k}}x
= y$. Notice that $\tau(x) = \infty$, while it is possible that
$\tau(y) < \infty$.  The definition of an $\alpha$-limit point is
analogous.  The set of $\omega$-limit [resp.~$\alpha$-limit] points
is called the {\em $\omega$-limit set \textup{[}$\alpha$-limit
set\textup{]}\/} of $x$, and denoted by $\omega(x)$ [$\alpha(x)$].

The symbol $\lVert \cdot \rVert$ stands for the Euclidean norm in
$\Rn$. We say $x \in X$ is {\em forward nonwandering\/} if for each
$\epsilon > 0$ and each $0 < t < \tau(x)$ there are $y \in X$ with
$\tau(y) > t$ and $t < \theta < \tau(y)$ such that $\lVert x-y \rVert
< \epsilon$ and $\lVert x - \phi_{\theta}y \rVert < \epsilon$.  A
point $x \in X$ is {\em backward nonwandering\/} if for each
$\epsilon > 0$ and each $\sigma(x) < t < 0$ there are $y \in X$ with
$\sigma(y) < t$, and $\sigma(y) < \theta < t$ such that $\lVert x - y
\rVert < \epsilon$ and $\lVert x - \phi_{\theta}y \rVert < \epsilon$.
A point that is forward nonwandering or backward nonwandering is
called {\em nonwandering\/}. It is straightforward that an
$\omega$-limit point is forward nonwandering.  Notice that in the
above definitions we do not assume the forward [backward]
semitrajectory of either $x$ or $y$ to be defined on the whole
half-line $[0,\infty)$ [$(-\infty,0]$].

For two points $x$, $y\in\Rn$ denote
\begin{eqnarray*}
x \le y & \quad \text{if } x^{i} \le y^{i} \text{ for each }i,
\\
x < y & \quad \text{if } x \le y \text{ and } x \ne y,
\\
x \ll y & \quad \text{if } x^{i} < y^{i} \text{ for each } i.
\end{eqnarray*}

For $x\le y$ we define a {\em closed order interval\/} as
\begin{equation*}
[x,y] := \{z \in \Rn: x \le z \le y\},
\end{equation*}
and for $x\ll y$ we define an {\em open order interval\/} as
\begin{equation*}
[[x,y]] := \{z \in \Rn: x \ll z \ll y\}.
\end{equation*}
A set $X\subset\Rn$ is said to be {\em p-convex\/} if the line
segment with endpoints $x$ and $y$ is contained in $X$ for each $x$,
$y \in X$, $x < y$, and {\em order-convex\/} if $[x,y] \subset X$ for
each $x$, $y \in X$, $x<y$.

The next result gives an important property of cooperative
systems of ODEs.
\begin{theorem}
\label{coop}
Assume \eqref{eq} is a cooperative system of ODEs on a p-convex open
set $X \subset \Rn$.  Let $x \le y$.  Then $\phi_{t}x \le \phi_{t}y$
for each $t \in [0, \min(\tau(x),\tau(y)))$.
\end{theorem}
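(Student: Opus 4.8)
The plan is to track the difference $w(t) := \phi_{t}y - \phi_{t}x$ and show it stays in the nonnegative cone. Put $T := \min(\tau(x),\tau(y))$. Since $w(0) = y - x \ge 0$, it suffices to prove $w(t) \ge 0$ for every $t \in [0,T)$. Differentiating gives $\dot{w}(t) = f(\phi_{t}y) - f(\phi_{t}x)$, and whenever the segment $\{\phi_{t}x + s\,w(t) : 0 \le s \le 1\}$ lies in $X$ the fundamental theorem of calculus yields the representation $\dot{w}(t) = A(t)\,w(t)$, where
\[
a_{ij}(t) = \int_{0}^{1} \frac{\partial f^{i}}{\partial x^{j}}\bigl(\phi_{t}x + s\,w(t)\bigr)\, ds .
\]
By the cooperativity hypothesis $a_{ij}(t) \ge 0$ for $i \ne j$, i.e.\ $A(t)$ is a Metzler matrix (nonnegative off-diagonal entries). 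Thus the whole theorem reduces to a statement about linear nonautonomous systems whose coefficient matrix is Metzler.

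I would isolate the core as a lemma: if $A(\cdot)$ is continuous and Metzler on a compact interval $[0,\beta]$ and $\dot{w} = A(t)\,w$ with $w(0) \ge 0$, then $w(t) \ge 0$ throughout. To prove it, choose $\lambda$ so large that $a_{ii}(t) + \lambda \ge 0$ for all $i$ and all $t \in [0,\beta]$, and set $u(t) := e^{\lambda t} w(t)$, so that $\dot{u} = P(t)\,u$ with $P(t) := A(t) + \lambda\,\Id$ now \emph{entrywise} nonnegative. Writing this as $u(t) = u(0) + \int_{0}^{t} P(s)\,u(s)\,ds$, I would run the Picard iteration $u_{0} \equiv u(0)$, $u_{k+1}(t) = u(0) + \int_{0}^{t} P(s)\,u_{k}(s)\,ds$. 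A nonnegative matrix maps the cone into itself, so induction keeps every $u_{k} \ge 0$, and the iterates converge uniformly on $[0,\beta]$ to $u$; since the cone is closed, $u \ge 0$, whence $w = e^{-\lambda t} u \ge 0$.

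The remaining, and genuinely delicate, point is that the representation $\dot{w} = A(t)\,w$ is available only where the segment $\{\phi_{t}x + s\,w(t)\}$ sits in $X$, and p-convexity guarantees this only once $\phi_{t}x$ and $\phi_{t}y$ are comparable --- exactly what we are trying to prove. I would break this apparent circularity by a continuity argument. Let $t^{*}$ be the supremum of those $\tau$ for which $w \ge 0$ on $[0,\tau]$; by closedness of the cone $w(t^{*}) \ge 0$, so $\phi_{t^{*}}x \le \phi_{t^{*}}y$, and by p-convexity the compact segment joining them lies in the open set $X$ together with an entire neighborhood of it. Continuity of the flow then keeps the segment joining $\phi_{t}x$ and $\phi_{t}y$ inside that neighborhood, hence inside $X$ regardless of comparability, for $t$ in a one-sided neighborhood $[0,t^{*}+\delta)$. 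On this larger interval $A(\cdot)$ is well defined, continuous and Metzler, so the lemma gives $w \ge 0$ on $[0,t^{*}+\delta)$, contradicting the maximality of $t^{*}$ unless $t^{*} = T$. This yields $w(t) \ge 0$ on all of $[0,T)$, which is the assertion. I expect the hardest part to be precisely this domain bookkeeping; the positivity mechanism itself becomes routine once the system is linearized.
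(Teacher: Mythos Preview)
The paper does not prove this theorem at all: immediately after stating it, the author writes that it was proved by M\"uller~(1927) and Kamke~(1932), with gaps filled by Wa\.zewski~(1950), and moves on. So there is no ``paper's own proof'' to compare against.

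Your argument is a correct and standard modern proof of the Kamke--M\"uller monotonicity theorem. The linearization along the segment, the shift $P(t)=A(t)+\lambda\Id$ to obtain an entrywise nonnegative matrix, and the Picard-iteration positivity argument are all sound. The continuity argument closing the apparent circularity (p-convexity only applies once comparability is known) is handled correctly: you use openness of $X$ to thicken the segment at time $t^{*}$ and continuity of the flow to keep nearby segments inside $X$ regardless of comparability. One small point worth making explicit is that the extension step can be applied locally from $t^{*}$ rather than on all of $[0,t^{*}+\delta)$: since $w(t^{*})\ge 0$ and the representation $\dot w = A(t)w$ holds on $[t^{*},t^{*}+\delta)$, the lemma on any compact subinterval $[t^{*},t^{*}+\delta']$ already gives the contradiction. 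This avoids having to verify continuity of $A(\cdot)$ across the join at $t^{*}$, though that continuity does in fact hold.
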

The above theorem was proved in \cite{Mueller27} and \cite{Kamke32}.
Some gaps in the earlier proofs were filled in \cite{W50}. The
property is referred to as {\em monotonicity\/} of the local flow
generated by \eqref{eq}.

An important feature of cooperative systems of $n$ ordinary
differential equations is that the limiting behavior of a point whose
forward semiorbit has compact closure is at most so complicated as
that in a general system of $n-1$ ODEs.  More precisely, the
following result holds (see Theorem A in \cite{Moe82}):
\begin{theorem}
\label{coop-syst}
Let \eqref{eq} be a cooperative system of ODEs on a p-convex open set
$X\subset\Rn$.  Assume that the forward
\textup{[}resp.~backward\textup{]} semiorbit of $x \in X$ has compact
closure in $X$.  Put $L = \omega(x)$ \textup{[}resp.~$L =
\alpha(x)$\textup{]}. Then the restricted flow $\{\phi_{t}|L\}$ is
topologically equivalent to the flow of a Lipschitz system of ODEs on
$\Rnmin$.  Moreover, no two points in $L$ are related by $\ll$.
\end{theorem}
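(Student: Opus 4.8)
The plan is to prove the two assertions in the reverse of the order stated, since the unorderedness of $L$ is exactly what makes the low-dimensional Lipschitz model available. Throughout I treat the forward case $L=\omega(x)$; the backward case is obtained by reversing time, which turns \eqref{eq} into a competitive system whose local flow preserves $\le$ for negative times, so that every step below applies verbatim with the time direction reversed. I record first the standard soft facts: since the forward semiorbit of $x$ has compact closure in $X$, the trajectory is defined for all $t\ge 0$, and $L$ is nonempty, compact, connected and invariant under the full flow $\{\phi_t\}$, which is therefore complete on $L$.

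Step 1 (no two points of $L$ are related by $\ll$). I would argue by contradiction: suppose $a,b\in L$ with $a\ll b$, and fix $\epsilon>0$ so small that $u\in B(a,\epsilon)$ and $v\in B(b,\epsilon)$ force $u\ll v$, with the two balls disjoint and contained in $X$. Because $a,b\in\omega(x)$, the trajectory of $x$ enters each ball at arbitrarily large times, so it crosses between them infinitely often. Selecting a point $y=\phi_{s}x\in B(a,\epsilon)$ followed by a later return $\phi_{s+T}x\in B(b,\epsilon)$ yields a genuine $T>0$ with $y\ll\phi_{T}y$; by monotonicity (Theorem~\ref{coop}) the sequence $\phi_{kT}y$ is nondecreasing, and by compactness it converges to a point $p\in L$ fixed by $\phi_{T}$ with $p\ge\phi_{T}y$, hence $p\gg a$. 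Running the same construction on a down-crossing (near $b$, then near $a$) produces a nonincreasing iteration converging to a point of $L$ lying below $B(a,\epsilon)$. The delicate point is to convert these trapped, convergent monotone iterations into a contradiction with the recurrence of the trajectory at the two distinct ordered values $a\ll b$; I expect this squeezing step to be the main obstacle. It is here that the limit-set hypothesis, as opposed to mere invariance, is indispensable: an invariant circle does contain $\ll$-related points, and is excluded precisely because it cannot arise as an $\omega$-limit set of a monotone flow.

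Step 2 (the Lipschitz model on $\Rnmin$). Let $\mathbf{1}=(1,\dots,1)$ and let $P\colon\Rn\to H$ be the orthogonal projection onto the hyperplane $H=\mathbf{1}^{\perp}\cong\Rnmin$ along the diagonal $\mathbf{1}$. I claim $P|_{L}$ is a bi-Lipschitz homeomorphism onto its image. Injectivity is immediate from Step~1: if $P(p)=P(q)$ with $p\neq q$ then $p-q$ is a nonzero multiple of $\mathbf{1}$, so $p\ll q$ or $q\ll p$, contradicting unorderedness. For the Lipschitz inverse I would use the elementary cone estimate that any $v\in\Rn$ having both a nonnegative and a nonpositive coordinate satisfies $\lVert v\rVert\le(1+\sqrt{n})\,\lVert Pv\rVert$ (write $v=Pv+s\mathbf{1}$ and note that the sign hypothesis forces $\lvert s\rvert\le\lVert Pv\rVert$). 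Applying this to $v=p-q$, which by unorderedness has both a nonnegative and a nonpositive coordinate, gives $\lVert p-q\rVert\le(1+\sqrt{n})\lVert P(p)-P(q)\rVert$, so $(P|_{L})^{-1}$ is Lipschitz.

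Finally I would transport the dynamics. Since $P$ is linear, the push-forward field is $g:=P\circ f\circ(P|_{L})^{-1}$, Lipschitz on the compact set $P(L)$ because $f$ is $C^{1}$ (hence Lipschitz on $L$) and $(P|_{L})^{-1}$ is Lipschitz. I extend $g$ to a globally Lipschitz field on $\Rnmin$ by a componentwise McShane extension. By invariance of $L$ the set $P(L)$ is invariant under the flow of $g$, and $P|_{L}$ conjugates $\{\phi_{t}|L\}$ to the restriction of this Lipschitz flow to $P(L)$, which yields the asserted topological (indeed Lipschitz) equivalence. I regard Step~2 as essentially routine once Step~1 is in hand, so the whole difficulty is concentrated in the nonordering argument.
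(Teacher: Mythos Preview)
The paper does not give its own proof of this theorem; it is quoted from Hirsch~\cite{Moe82} (Theorem~A there), so there is nothing in the paper to compare your argument against. Your Step~2 is correct and is essentially Hirsch's construction: project $L$ along the diagonal line $\reals\mathbf{1}$, use Step~1 to see that the projection is bi-Lipschitz onto its image, push the vector field forward, and extend Lipschitz to all of $\Rnmin$.

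The genuine gap is exactly where you flag it, in Step~1, and it is smaller than you fear. Once you have a point $y$ on the trajectory with $y\ll\phi_{T}y$ for some $T>0$, the sequence $\phi_{kT}y$ is nondecreasing and, by compactness of the forward semiorbit closure, converges to some $p$ with $\phi_{T}p=p$. Continuous dependence on initial data gives $\phi_{kT+\tau}y\to\phi_{\tau}p$ uniformly for $\tau\in[0,T]$, so $\omega(x)=\omega(y)$ is precisely the periodic orbit $\Gamma:=\{\phi_{\tau}p:0\le\tau\le T\}$. Now finish on $\Gamma$ itself: let $T'>0$ be its minimal period (if $\Gamma$ is a single point then $a=b$, a contradiction). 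Writing $b=\phi_{s}a$ with $0<s<T'$, the sequence $\phi_{ks}a$ is nondecreasing on the compact set $\Gamma$ and hence converges to some $r\in\Gamma$ with $\phi_{s}r=r$; but $0<s<T'$ contradicts the minimality of $T'$. Thus a periodic orbit of a cooperative system never contains two $\ll$-related points, and Step~1 is complete. Your parenthetical that ``an invariant circle does contain $\ll$-related points'' is therefore somewhat misleading: what excludes such a configuration is not the limit-set hypothesis per se, but the reduction to a single periodic orbit, on which cooperativity alone already forbids $\ll$-related pairs.
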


An important class of cooperative systems is formed by {\em
cooperative irreducible\/} systems of ODEs, that is, cooperative
systems such that for each $x\in X$ the matrix
$[({\partial}f^{i}/{\partial}x^{j})(x)]$ is irreducible. A system of
ODEs is called {\em strongly cooperative\/} if
$({\partial}f^{i}/{\partial}x^{j})(x) > 0$ for $i \ne j$.

For cooperative irreducible systems Theorem \ref{coop} can be
strengthened to:
\begin{theorem}
\label{coop_irr}
Assume \eqref{eq} is a cooperative irreducible system of ODEs on a
p-convex open set $X \subset \Rn$.  Let $x < y$. Then $\phi_{t}x \ll
\phi_{t}y$ for each $t \in (0,\min(\tau(x),\tau(y)))$.
\end{theorem}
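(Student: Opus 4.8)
The plan is to reduce the assertion to a positivity property of the linear variational-type equation satisfied by the difference of the two solutions. Fix $T \in (0,\min(\tau(x),\tau(y)))$ and write $u(t) = \phi_{t}x$, $v(t) = \phi_{t}y$, $z(t) = v(t) - u(t)$ for $t \in [0,T]$. Theorem~\ref{coop} already gives $z(t) \ge 0$, and since $x \ne y$ forces $u(t) \ne v(t)$ by uniqueness of solutions, we in fact have $u(t) < v(t)$; p-convexity of $X$ then guarantees that the segment joining $u(t)$ to $v(t)$ lies in $X$. Using the fundamental theorem of calculus along this segment, I would write $z$ as a solution of the nonautonomous linear system $\dot z^{i} = \sum_{j} a_{ij}(t)\,z^{j}$, where
\[
a_{ij}(t) = \int_{0}^{1} \frac{\partial f^{i}}{\partial x^{j}}\bigl(u(t) + s\,z(t)\bigr)\,ds .
\]
By cooperativity the off-diagonal coefficients satisfy $a_{ij}(t) \ge 0$ for $i \ne j$ and all $t \in [0,T]$.

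The first technical step is a ``once positive, stays positive'' observation. Rewriting the $i$-th equation as $\dot z^{i} = a_{ii}(t)\,z^{i} + \sum_{j \ne i} a_{ij}(t)\,z^{j}$ and noting that the last sum is $\ge 0$ (off-diagonal nonnegativity together with $z \ge 0$), the integrating factor $\exp\bigl(-\int_{0}^{t} a_{ii}\bigr)$ shows that $t \mapsto z^{i}(t)\exp\bigl(-\int_{0}^{t} a_{ii}\bigr)$ is nondecreasing. Hence if $z^{i}$ is positive at one instant it is positive at all later instants, so the support $\{i : z^{i}(t) > 0\}$ is nondecreasing in $t$. Since $z(0) = y - x > 0$, at least one component $z^{i}$ is positive at $t = 0$, and therefore on all of $[0,T]$.

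The heart of the argument, and the step I expect to be the main obstacle, is extracting strict positivity from irreducibility. Suppose $\phi_{T}x \ll \phi_{T}y$ fails, i.e.\ $z^{i}(T) = 0$ for some $i$. Let $Z = \{i : z^{i} \equiv 0 \text{ on } [0,T]\}$; by the monotonicity of the support this is exactly the set of indices with $z^{i}(T) = 0$, so $Z$ is nonempty, while $Z \ne \{1,\dots,n\}$ by the previous paragraph. For $i \in Z$ we have $\dot z^{i} \equiv 0$ on $[0,T]$, whence $\sum_{j \notin Z} a_{ij}(t)\,z^{j}(t) = 0$; every summand is nonnegative, and $z^{j}(T) > 0$ for $j \notin Z$ (again by support monotonicity), so evaluating at $t = T$ forces $a_{ij}(T) = 0$ for all $i \in Z$, $j \notin Z$.

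Finally I would translate this back to the pointwise Jacobian. Because the integrand defining $a_{ij}$ is continuous and nonnegative, $a_{ij}(T) = 0$ implies $(\partial f^{i}/\partial x^{j})(u(T)) = 0$ (take $s = 0$). Thus the matrix $[(\partial f^{i}/\partial x^{j})(\phi_{T}x)]$ has a vanishing off-diagonal block with row indices in $Z$ and column indices in $\{1,\dots,n\}\setminus Z$, both nonempty, which is precisely the statement that this matrix is reducible --- contradicting the irreducibility hypothesis at $\phi_{T}x$. Hence $z(T) \gg 0$, and as $T$ was arbitrary the theorem follows. The delicate points to watch are the passage from the averaged coefficient $a_{ij}$ to the genuine partial derivative and the bookkeeping showing that $Z$ is a proper nonempty index set exhibiting reducibility.
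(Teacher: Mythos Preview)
Your argument is correct and is essentially the standard proof of this fact (as found, for instance, in Hirsch's papers or in Smith's monograph \cite{Hal95}). The paper itself does not supply a proof of Theorem~\ref{coop_irr}; it merely records it as a known strengthening of Theorem~\ref{coop}, so there is nothing to compare your approach against. The only cosmetic remark is that the ``delicate points'' you flag are not really delicate: the passage from $a_{ij}(T)=0$ to $(\partial f^{i}/\partial x^{j})(\phi_{T}x)=0$ is immediate since the integrand is continuous and nonnegative, and the bookkeeping on $Z$ is exactly the equivalence $\{i:z^{i}(T)=0\}=\{i:z^{i}\equiv 0\text{ on }[0,T]\}$ that you already established from the nondecreasing support.
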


The above property is called {\em strong monotonicity\/} of the local
flow generated by \eqref{eq}.

For cooperative irreducible systems one can say much more about their
behavior (see Theorem 2.4 in \cite{ST91}; for earlier results see
\cite{Moe88} and \cite{Pol89}):
\begin{theorem}
\label{generic}
Assume \eqref{eq} is a cooperative irreducible system of ODEs on a
p-convex open set $X \subset \Rn$ such that each forward semiorbit
has compact closure in $X$.  Then there exists an open dense set $Y
\subset X$ such that $\omega(x)$ is a singleton for each $x \in Y$.
\end{theorem}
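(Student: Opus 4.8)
The plan is to reduce the statement to two structural principles for the (strongly monotone) flow and then to run a genericity argument along strongly ordered line segments. Two ingredients are already at hand: strong monotonicity (Theorem~\ref{coop_irr}), so that $x<y$ forces $\phi_t x \ll \phi_t y$ for all $t>0$, and the nonordering of limit sets (last clause of Theorem~\ref{coop-syst}), so that no two points of an $\omega$-limit set are related by $\ll$. Since every forward semiorbit has compact closure, each $\omega(x)$ is a nonempty compact invariant set, and any singleton $\omega$-limit set is automatically an equilibrium; thus ``$\omega(x)$ is a singleton'' means exactly ``$x$ converges to an equilibrium.'' I extend $\ll$ to compact sets, writing $A \ll B$ when $a \ll b$ for all $a\in A$, $b\in B$. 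A preliminary observation for the openness half of the conclusion: if $\phi_T x \gg x$ (or $\phi_T x \ll x$) for some $T>0$, then by strong monotonicity the sequence $(\phi_{kT}x)_k$ is strictly monotone and bounded, hence convergent, so $x$ is convergent; moreover this is an open condition on $x$.

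The technical engine is the \emph{Limit Set Dichotomy}: if $x<y$ then either $\omega(x)=\omega(y)$, or $\omega(x)\ll\omega(y)$. To prove it I would first show $a\le b$ for all $a\in\omega(x)$, $b\in\omega(y)$: choose $t_k\to\infty$ with $\phi_{t_k}x\to a$, pass to a subsequence so that $\phi_{t_k}y\to b'\in\omega(y)$, and let $k\to\infty$ in $\phi_{t_k}x\ll\phi_{t_k}y$ to get $a\le b'$; a short invariance argument then upgrades this to $a\le b$ for every $b\in\omega(y)$. The dichotomy itself is a rigidity statement: if the two sets are not strongly ordered, so that $a\le b$ but $\neg(a\ll b)$ for some pair, then applying strong monotonicity on the invariant sets together with the nonordering principle forces the shared boundary to propagate, yielding $\omega(x)=\omega(y)$. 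In the same circle of ideas I would prove the companion claim that \emph{if $x<y$ and $\omega(x)=\omega(y)=:K$, then $K$ is a single equilibrium}: the orbits of $x$ and $y$ stay strictly ordered yet accumulate on one and the same set, and strong monotonicity together with nonordering collapses $K$ to a point.

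For density, fix a nonempty open $W\subset X$, choose a ball $B\subset W$, and pick $u,v\in B$ with $u\ll v$; then $\gamma(s):=(1-s)u+sv$ satisfies $\gamma(s)\ll\gamma(s')$ for $s<s'$ and $\gamma([0,1])\subset B\subset W$. By the Limit Set Dichotomy, $s\mapsto\omega(\gamma(s))$ is monotone, so the relation ``$\omega(\gamma(s))=\omega(\gamma(s'))$'' has intervals as equivalence classes, distinct classes carrying $\ll$-ordered limit sets. Every class of positive length contains parameters $a<b$ with equal limit sets, whence by the companion claim that common set is a single equilibrium; so each nondegenerate class consists of convergent points, and there are at most countably many classes of positive length. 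To locate convergent parameters inside any subinterval I would argue on the remaining, totally ordered part using a strictly increasing functional such as $\ell(x)=\sum_i x^i$ (whose values on the $\ll$-ordered limit sets fill disjoint intervals, only countably many of positive length) together with the finite-dimensional reduction of Theorem~\ref{coop-syst}, which constrains a persistent nontrivial unordered limit set along the arc and forces singletons on a dense set of $s$. This yields convergent points densely in $W$; to promote density to an \emph{open} dense set, I would show that the convergent parameters so produced can be taken to satisfy $\phi_T\gamma(s)\gg\gamma(s)$ or $\phi_T\gamma(s)\ll\gamma(s)$ for some $T>0$, so that the openness criterion of the first paragraph applies.

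I expect the main obstacle to be the Limit Set Dichotomy together with its companion rigidity claim: the delicate case is when the two limit sets meet or touch without being strongly ordered, and disentangling it is precisely where strong monotonicity and the nonordering principle must be played against each other. A second genuinely subtle point is guaranteeing \emph{singleton} rather than merely quasiconvergent limit sets on a dense set; this is where the finite-dimensionality encoded in Theorem~\ref{coop-syst} is indispensable, since in general (infinite-dimensional) strongly monotone systems one can only expect generic quasiconvergence.
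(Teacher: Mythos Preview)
The paper does not give its own proof of Theorem~\ref{generic}; it quotes the result from the literature (Theorem~2.4 in \cite{ST91}, with earlier versions in \cite{Moe88} and \cite{Pol89}).  Your outline---the Limit Set Dichotomy, the companion rigidity claim that equal limit sets along a strictly ordered pair collapse to a singleton, and the genericity argument along strongly ordered arcs---is precisely the strategy of those references, so there is no in-paper proof to compare against and your plan is the standard one from the cited sources.

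One caution on the plan itself: the step from ``dense'' to ``open dense'' is the weakest link as written.  It is not true in general that a point with $\omega(x)=\{e\}$ must satisfy $\phi_{T}x\gg x$ or $\phi_{T}x\ll x$ for some $T>0$ (the approach to $e$ can be non-monotone), so the open criterion of your first paragraph does not automatically apply to every convergent point you produce on the arc.  In the Smith--Thieme argument this step is handled via the Sequential Limit Set Trichotomy, which shows that on a dense set of parameters the limit equilibrium is order-stable and hence attracts a full neighborhood; in Hirsch's earlier treatment a related stability argument plays the same role.  You would need to supply that piece rather than rely on the $\phi_{T}x\gg x$ shortcut.
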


Generally, in cooperative irreducible systems there are no
restrictions on $\alpha$-limit sets.  More precisely, according to
\cite{Sm76}, any dynamics on the standard $n$-dimensional simplex can
be embedded as a repeller in an $(n+1)$-dimensional strongly
cooperative system of ODEs.

For a recent monograph on cooperative systems of ODEs see
\cite{Hal95}.

\section{Cooperative Irreducible Systems with First Integral}

By a {\em first integral\/} for \eqref{eq} we mean a continuous
function $H \colon X \to \reals$ which is constant on orbits of
\eqref{eq}. A first integral is {\em nontrivial\/} if it is not
constant on any open set.

The existence of nontrivial first integrals puts severe restrictions
on cooperative irreducible systems.  Namely, it was proved in
\cite{Moe85} that if the set of equilibria is countable and all
points have forward semiorbit closure compact in $X$ then each first
integral is trivial.

In the three-dimensional case much more can be proved, even without
assuming the abundance of points with compact forward semiorbit
closure (see \break \cite{JM95}):
\begin{theorem}
\label{three-d}
Assume that \eqref{eq} is a cooperative irreducible system of ODEs on
$\Rn$ admitting a $C^{1}$ first integral with nonzero gradient.  Then
each limit set is either empty or a singleton.
\end{theorem}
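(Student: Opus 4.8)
The plan is to play the invariance of the level sets of $H$ against the Perron--Frobenius structure that strong monotonicity (Theorem~\ref{coop_irr}) forces on the variational flow. Throughout, $L$ denotes a nonempty limit set; I treat $L=\omega(x)$, the case $L=\alpha(x)$ being recovered by reversing time, which turns \eqref{eq} into a competitive irreducible system (for which the analogues of the results quoted above hold), preserves $H$, and interchanges the two kinds of limit set.

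First I would extract the infinitesimal content of the first integral. Differentiating $H(\phi_{t}x)\equiv H(x)$ gives $\grad H(x)\cdot f(x)=0$, so each level set $S_{c}:=\{x:H(x)=c\}$ is invariant and, as $\grad H$ never vanishes, is a $C^{1}$ surface; moreover $L\subset S_{H(x)}$, so the whole question lives on a single invariant surface. Differentiating once more yields $D^{2}H\,f+(Df)^{\top}\grad H=0$, which says that along every orbit the covector $p(t):=\grad H(\phi_{t}x)$ solves the adjoint variational equation $\dot p=-(Df(\phi_{t}x))^{\top}p$. Since $(Df)^{\top}$ is again cooperative and irreducible, the variational flow of $Df$ and that of $(Df)^{\top}$ are both strongly monotone, and this is the mechanism I would exploit.

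Assume first that $L$ has compact closure. Theorem~\ref{coop-syst} reduces the flow on $L$ to one on $\Rnmin$ and shows $L$ is unordered; as the ambient dimension is three this reduced flow is planar, so the Poincar\'e--Bendixson theorem leaves only a single equilibrium (the goal), a nonconstant periodic orbit, or a graphic of equilibria and connections, and three-dimensionality enters exactly here. The heart of the matter is to exclude a nonconstant periodic orbit $\gamma\subset S_{H(x)}$ of period $T$. I would pass to Floquet theory: the monodromy $M$ of the variational equation is strongly positive, so by Perron--Frobenius its spectral radius $\mu_{1}$ is simple, strictly dominant, and carried by a vector $\gg 0$. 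The flow direction $f(p)$ is an eigenvector of $M$ and $\grad H(p)$ (periodic, hence fixed by the adjoint monodromy $M^{-\top}$) an eigenvector of $M^{-\top}$, each with eigenvalue $1$; neither can be sign-definite along a closed orbit, for were $f(p)\gg 0$ it would be the Perron vector, forcing $\mu_{1}=1$ and every transverse multiplier below $1$, in conflict with the neutral direction produced below; hence $1<\mu_{1}$. Choosing a cross-section $\Sigma$ transverse to $\gamma$ and using that the return map preserves the nonconstant function $H|_{\Sigma}$, one finds that the two multipliers transverse to $\gamma$ are $1$ and $\mu_{1}$, so $\gamma$ is \emph{repelling within} $S_{H(x)}$.

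The remaining, and decisive, difficulty is that repulsion alone does not yet forbid $\gamma$: the transverse-to-$S$ multiplier equals $1$, a neutral direction forced by the first integral, along which an implicit-function argument even produces a whole one-parameter family of such orbits sweeping out an invariant cylinder. Closing off this neutral, first-integral-induced direction --- showing it cannot sustain recurrence --- is precisely the step that qualitative monotonicity and Perron--Frobenius do not settle, and it is where I expect the canonically defined contractive Finsler structure announced in the abstract to do the real work. The same contraction is what I would use to handle the limit sets \emph{without} compact closure, to which the Poincar\'e--Bendixson reduction does not apply, as well as the exclusion of nontrivial graphics, which a bare ordering argument does not reach because a heteroclinic connection need not join comparable equilibria unless it leaves along the principal (Perron) direction.
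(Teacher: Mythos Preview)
This statement is not proved in the present paper at all: it is quoted from \cite{JM95} as background for the three-dimensional case (see the sentence preceding Theorem~\ref{three-d}), and no argument is given here. So there is no ``paper's own proof'' to compare with, and your proposal has to be judged on its own merits.

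On those merits, the proposal is not a proof but an outline that you yourself flag as incomplete. You reduce, via Theorem~\ref{coop-syst} and Poincar\'e--Bendixson, to excluding a nontrivial periodic orbit $\gamma$ inside a level surface, carry out a Floquet/Perron--Frobenius analysis to argue $\gamma$ is transversally unstable within its level set, and then explicitly concede that this does not exclude $\gamma$; for the missing step you point to ``the canonically defined contractive Finsler structure announced in the abstract.'' That is where the argument breaks down for this theorem: the Finsler construction of \cite{JM91}, and the contraction result (Proposition~2 there) on which the present paper's Theorem~\ref{mt} rests, require $\grad H\gg 0$. Theorem~\ref{three-d} only assumes $\grad H\neq 0$, and the whole point of singling out the three-dimensional case is precisely that one can get by without the positivity of $\grad H$; the Finsler machinery is therefore unavailable, not merely unexplained. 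The same objection applies to your proposed treatment of noncompact limit sets and of graphics.

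There are also softer issues. Your Floquet step asserts that the three multipliers are $\mu_{1},1,1$; the argument that $f(p)$ cannot be the Perron direction is correct (a sign-definite $f$ along a closed orbit contradicts periodicity), but you do not justify why the remaining nontrivial multiplier must equal $\mu_{1}$ rather than some $\mu<1$, nor why the eigenvalue $1$ coming from the first integral is geometrically distinct from the trivial one. And the time-reversal reduction for $\alpha$-limit sets is too quick: the reversed system is competitive, and while Theorem~\ref{coop-syst} does dualize, the strong-monotonicity and Perron--Frobenius inputs you invoke operate in the \emph{forward} direction for a cooperative system, so one has to say explicitly which properties survive the reversal. In short, the strategy is reasonable as heuristics, but the load-bearing step relies on a tool whose hypotheses are strictly stronger than those of the theorem you are trying to prove.
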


\section{Cooperative Irreducible Systems with Monotone
First Integral}

\subsection{Limiting Behavior}

When one assumes that a first integral for a cooperative irreducible
system of ODEs is {\em strongly monotone\/}, that is, from $x<y$ it
follows $H(x) < H(y)$, then, under the assumption that all forward
semiorbits have compact closure in $X$, all $\omega$-limit sets are
singletons (compare \cite{JM87}).

This result carries over to the case of abstract strongly monotone
semidynamical systems with monotone first integral defined on
strongly ordered Banach spaces (\cite{A}).

Further, for some classes of cooperative periodic [resp.\ almost
periodic] (in time) systems of ODEs admitting a first integral with
appropriate monotonicity properties it was proved that each solution
with compact forward semiorbit closure converges to a periodic
[resp.\ almost periodic] solution (\cite{N79}, \cite{SN80},
\cite{Tang93}, \cite{Jifa95}).  Moreover, in the almost periodic case
in the corresponding nonlinear skew-product (local) flow the image of
that solution intersects each fiber at precisely one point (in the
case where the system is periodic with period $T$ this simply means
that the limiting solution has period $T$).

In many of the proofs of the results mentioned above the
idea was to use a kind of Lyapunov function.

\medskip
In the present note we do not assume any compactness of forward (or
backward) semiorbits.  On the other hand, we make extensive use of
the fact that a cooperative irreducible system of ODEs generates a
linear skew-product dynamical system on the tangent bundle of $X$
possessing some monotonicity properties.

\smallskip
Let us introduce some notation.  For $t \in (\sigma(x),\tau(x))$ the
derivative $D\phi_{t}(x)$ of $\phi_{t}(x)$ with respect to $x$ is a
linear isomorphism from the tangent space at $x$ into the tangent
space at $\phi_{t}x$, satisfying the nonautonomous linear matrix ODE
\begin{equation*}
M' = Df(\phi_{t}x)M
\end{equation*}
with initial condition $M(0) = \Id$, where $Df :=
[({\partial}f^{i}/{\partial}x^{j})]$.  The local linear skew-product
dynamical system
\begin{equation*}
(x,v) \mapsto (\phi_{t}x,D\phi_{t}(x)v), \quad x \in X, \
v \in \Rn, \ \sigma(x) < t < \tau(x),
\end{equation*}
will be referred to as the {\em derivative\/} local flow.

The order relations $\le$, $<$ and $\ll$ are defined in a natural way
on tangent vectors.  We will refer to vectors $v \ge 0$ as {\em
nonnegative\/}, and to vectors $v \gg 0$ as {\em positive\/}.  The
set of all nonnegative (free) vectors is called the {\em
\textup{(}nonnegative\textup{)} cone\/}.

\medskip
The derivative flow enjoys the following strong monotonicity property
(see \newline \cite{Moe85}):
\begin{theorem}
\label{skew-mon}
Assume that \eqref{eq} is a cooperative irreducible system of ODEs on
an open set $X \subset \Rn$.  Then for each $x \in X$, each $t \in
(0,\tau(x))$ and each $v > 0$ one has $D\phi_{t}(x)v \gg 0$.
\end{theorem}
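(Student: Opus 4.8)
The plan is to reduce the statement to a property of the linear nonautonomous equation satisfied by a single tangent vector, and then to exploit cooperativity (to keep the solution in the nonnegative cone) together with irreducibility (to push it into the open cone). Fix $x \in X$ and $v > 0$, and set $w(t) := D\phi_{t}(x)v$. Differentiating the defining matrix equation, $w$ solves the linear initial value problem $w'(t) = A(t)w(t)$, $w(0) = v$, where $A(t) := Df(\phi_{t}x)$. Because \eqref{eq} is cooperative, every off-diagonal entry $A^{i}_{j}(t)$ (with $i \ne j$) is nonnegative; because it is irreducible, each matrix $A(t)$ is irreducible.

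First I would show that the nonnegative cone $\Rnplus$ is positively invariant for this linear flow, i.e.\ $w(t) \ge 0$ for all $t \in [0,\tau(x))$. This is the linear instance of Kamke's theorem (Theorem \ref{coop} applied to the cooperative nonautonomous linear system); alternatively it follows from a subtangentiality criterion, since at any boundary point of $\Rnplus$ with $w^{i} = 0$ one has $(A(t)w)^{i} = \sum_{j \ne i} A^{i}_{j}(t)w^{j} \ge 0$, so the vector field does not point out of the cone. I would make this rigorous by the standard device of perturbing to $w'_{\epsilon} = A(t)w_{\epsilon} + \epsilon\mathbf{1}$ with slightly positive initial data, verifying that the perturbed solution stays $\gg 0$, and then letting $\epsilon \to 0^{+}$.

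The crucial step is to upgrade $w(t) \ge 0$ to $w(t) \gg 0$ for $t > 0$ using irreducibility. Fix $t_{1} \in (0,\tau(x))$ and let $S := \{\, i : w^{i}(t_{1}) = 0 \,\}$. For $i \in S$ the function $w^{i} \ge 0$ attains an interior minimum at $t_{1}$, hence $(w^{i})'(t_{1}) = 0$; but $(w^{i})'(t_{1}) = \sum_{j \ne i} A^{i}_{j}(t_{1}) w^{j}(t_{1})$ is a sum of nonnegative terms, so each term vanishes, forcing $w^{j}(t_{1}) = 0$ (that is, $j \in S$) whenever $A^{i}_{j}(t_{1}) > 0$. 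Thus no arc of the directed graph associated with $A(t_{1})$ leaves $S$. If $S$ were a nonempty proper subset, this graph would fail to be strongly connected, contradicting irreducibility of $A(t_{1})$; and $S = \{1,\dots,n\}$ would give $w(t_{1}) = 0$, whence $w \equiv 0$ by uniqueness for the linear equation, contradicting $v \ne 0$. Therefore $S = \emptyset$, i.e.\ $w(t_{1}) \gg 0$.

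I expect the main obstacle to be the rigorous justification of cone invariance in the nonautonomous setting (the first step): the subtangentiality condition is only heuristic unless one supplies the perturbation-and-limit argument or quotes the appropriate comparison theorem, and some care is needed because $A(t)$ varies with $t$ and $\tau(x)$ may be finite. Once invariance of $\Rnplus$ is secured, the passage to the open cone is a short graph-theoretic argument, so the weight of the proof rests on that first step.
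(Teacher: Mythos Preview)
Your argument is correct and follows the standard route: positive invariance of the nonnegative cone for the linearized nonautonomous equation via the Kamke--M\"uller comparison principle, followed by the upgrade to strict positivity using irreducibility of the Jacobian. The paper itself does not supply a proof of this theorem; it simply records the statement with a reference to Hirsch (1985), whose argument is essentially the one you outline. Your interior-minimum device for the second step is a clean way to organize the irreducibility part (it is equivalent to the usual observation that the zero set of coordinates at any positive time would have to be invariant under the incidence graph of $Df$), and the perturbation scheme you sketch is exactly the standard way to make cone invariance rigorous when the coefficient matrix $A(t)$ is merely continuous. One small remark: your appeal to ``Theorem~\ref{coop} applied to the cooperative nonautonomous linear system'' is only heuristic, since that theorem as stated in the paper is autonomous; but you already note this and provide the correct workaround.
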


Let $\mathcal{H}(x)$ stand for the level set of the first integral
$H$ passing through $x$.  If $H$ is of class $C^{1}$ and $\grad{H(x)}
\ne 0$ for all $x \in X$ then $\mathcal{H}(x)$ is a $C^{1}$
submanifold of codimension $1$.

The main result of the present note is

\begin{theorem}
\label{mt}
Let \eqref{eq} be a cooperative irreducible system of ODEs on an open
$X \subset \Rn$.  Assume that \eqref{eq} admits a first integral $H$
of class $C^{1}$ with positive gradient.  Then each nonwandering
point is an equilibrium.
\end{theorem}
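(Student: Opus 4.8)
The plan is to manufacture from the announced Finsler structure a continuous, genuinely strict Lyapunov function whose only null level is the equilibrium set, and then to show that a nonwandering point forces this function to be constant along an orbit, which can happen only at an equilibrium. Writing $p(x):=\grad H(x)\gg0$, I would put on each tangent space the weighted norm $N(x,v):=\sum_i p_i(x)\,|v^i|$ and set
\[
g(x):=N(x,f(x))=\sum_i \tfrac{\partial H}{\partial x^i}(x)\,|f^i(x)|.
\]
This $g$ is continuous, since $H$ is $C^1$ and $f$ is continuous.

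First I would record two facts. Differentiating the first-integral identity $H(\phi_t x)=H(x)$ with respect to $x$ gives $\grad H(\phi_t x)^{\top}D\phi_t(x)=\grad H(x)^{\top}$, so the functional is invariant along the derivative flow: $\langle p(\phi_t x),D\phi_t(x)v\rangle=\langle p(x),v\rangle$ for every $v$. Secondly, since $H$ is a first integral one has $\langle \grad H(x),f(x)\rangle\equiv0$; because $p(x)\gg0$, the relation $\sum_i p_i(x)f^i(x)=0$ forces $f(x)$ to have both a strictly positive and a strictly negative component whenever $f(x)\neq0$. In particular $g$ vanishes exactly at the equilibria.

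The central step is to show that $N$ is contracted by the derivative flow, strictly on sign-changing vectors such as $f$. Given $x$ with $f(x)\neq0$ and $T>0$, I would split $f(x)=v^{+}-v^{-}$ into positive and negative parts; by the previous paragraph both $v^{+}>0$ and $v^{-}>0$. Theorem \ref{skew-mon} then yields $a:=D\phi_T(x)v^{+}\gg0$ and $b:=D\phi_T(x)v^{-}\gg0$, so that $f(\phi_T x)=D\phi_T(x)f(x)=a-b$ with $|a^i-b^i|<a^i+b^i$ for every $i$, the strictness coming precisely from $a,b\gg0$. Weighting by $p(\phi_T x)\gg0$ and invoking the invariance of the functional to rewrite $\langle p(\phi_T x),a\rangle+\langle p(\phi_T x),b\rangle=\langle p(x),v^{+}+v^{-}\rangle=g(x)$, I obtain $g(\phi_T x)=\langle p(\phi_T x),|a-b|\rangle<g(x)$. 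By the semigroup property this makes $g$ strictly decreasing along every non-equilibrium orbit on all of $(\sigma(x),\tau(x))$, and constant along equilibria; in particular $g$ is nonincreasing along every orbit.

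Finally I would run the recurrence argument. Suppose $x_0$ is forward nonwandering but $f(x_0)\neq0$; fix $0<t_1<\tau(x_0)$ and set $3\eta:=g(x_0)-g(\phi_{t_1}x_0)>0$. Using continuity of $g$ and of $y\mapsto g(\phi_{t_1}y)$ near $x_0$, choose $\epsilon>0$ so small that $\|z-x_0\|<\epsilon$ implies $g(z)>g(x_0)-\eta$ and $\|y-x_0\|<\epsilon$ implies $g(\phi_{t_1}y)<g(x_0)-2\eta$. Forward nonwandering, applied with this $\epsilon$ and $t=t_1$, produces $y$ and $\theta>t_1$ with $\|y-x_0\|<\epsilon$ and $\|\phi_\theta y-x_0\|<\epsilon$; then $g(\phi_\theta y)>g(x_0)-\eta$, whereas monotonicity and $\theta>t_1$ give $g(\phi_\theta y)\le g(\phi_{t_1}y)<g(x_0)-2\eta$, a contradiction. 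The backward case is symmetric, using that $g$ is strictly increasing in backward time. I expect the contraction step to be the main obstacle: its content is the strict inequality $|a^i-b^i|<a^i+b^i$ at every coordinate, which is exactly where strong monotonicity of the derivative flow (Theorem \ref{skew-mon}, hence irreducibility) is indispensable; without it one gets only a nonstrict Lyapunov function, too weak to rule out nontrivial recurrence.
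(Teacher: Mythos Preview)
Your proof is correct. At its core it is the same Lyapunov idea as the paper's---your function $g(x)=\sum_i(\partial H/\partial x^i)(x)\,|f^i(x)|$ is, up to a factor $2$, exactly the paper's $|f(x)|_x$: the Finsler unit ball in the paper is $A_x-A_x$ with $A_x=\{v\ge0:\langle\grad H(x),v\rangle=1\}$, and one checks directly that for $w$ tangent to $\mathcal{H}(x)$ the Minkowski functional of $A_x-A_x$ equals $\tfrac12\sum_i p_i(x)|w^i|$. The differences are in presentation and in the final step. You write the Lyapunov quantity by an explicit formula rather than via the Finsler construction, and you prove the strict contraction directly (the splitting $f=v^+-v^-$, pushing forward by $D\phi_T$ to $a,b\gg0$, and the coordinatewise inequality $|a^i-b^i|<a^i+b^i$) instead of citing Proposition~2 of \cite{JM91}; this is in fact a clean direct proof of that proposition for the vector $v=f(x)$. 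For the endgame, the paper fixes a transverse disk, extracts a uniform contraction ratio $\lambda<1$ over it, and compares $|f|$ at a return point; you run the standard strict-Lyapunov recurrence argument with a $3\eta$-gap. Your route is more self-contained and avoids the disk/transversality setup; the paper's route, on the other hand, yields a uniform quantitative contraction near $x$ and ties the result to the Finsler framework of \cite{JM91} that is reused in the later global statements.
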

\begin{proof}
In \cite{JM91} it was proved that for a cooperative irreducible
system of ODEs admitting a $C^{1}$ first integral with positive
gradient there is a canonical Finsler on the foliation of $X$ into
level sets of $H$ under which the derivative (local) flow is
contractive.  By a {\em Finsler\/} we understand a continuous mapping
$(x,v) \mapsto \lvert v \rvert_{x}$ such that for each $x$ the
assignment $v \mapsto \lvert v \rvert_{x}$ is a norm on the tangent
space at $x$ of $\mathcal{H}(x)$.

This canonical Finsler is constructed in the following way: For each
$x\in X$, we translate the tangent space of $\mathcal{H}(x)$ at $x$
by a positive vector $v_{x}$ such that $\langle \grad{H(x)}, v_{x}
\rangle = 1$.  The intersection of the resulting hyperplane with the
nonnegative cone is a compact convex set $A_{x}$ linearly isomorphic
to the standard $(n-1)$-dimensional simplex. Finally, we take the
compact convex balanced set $A_{x} - A_{x}$ to be the unit ball in
the tangent space of $\mathcal{H}(x)$ at $x$.

Proposition~2 in \cite{JM91} states that for each $x \in X$, each
nonzero vector $v$ tangent at $x$ to $\mathcal{H}(x)$ and each $t \in
(0,\tau(x))$ one has $\lvert D\phi_{t}(x)v \rvert_{\phi_{t}(x)} <
\lvert v \rvert_{x}$. (As a matter of fact, that proposition is
stated for strongly cooperative systems, but its proof carries over
{\em verbatim\/} to the case of cooperative irreducible systems.)

Suppose for contradiction that $x$ is a forward nonwandering point
not being an equilibrium.  Let $L \subset X$ be a $C^{1}$ embedded
$(n-1)$-dimensional disk transverse to $f(x)$ and having $x$ in its
relative interior.  Pick $s > 0$, $s < \min\{\tau(z): z \in L\}$.
Denote
\begin{equation*}
\lambda := \max\{\lvert D\phi_{s}(z)v \rvert_{\phi_{s}(z)}: z \in L,
\ v \in T_{z}\mathcal{H}, \ \lvert v \rvert_{z} = 1\},
\end{equation*}
where $T_{z}\mathcal{H}$ denotes the tangent space of
$\mathcal{H}(z)$ at $z$.  It is obvious that $0 < \lambda<1$.  Now,
take a $C^{1}$ embedded $(n-1)$-dimensional disk $M\subset L$
transverse to $f(x)$, having $x$ in its relative interior and such
that
\begin{equation*}
(1-\mu) \lvert f(x) \rvert_{x} < \lvert f(z) \rvert_{z} < (1+\mu)
\lvert f(x) \rvert_{x} \text{ for all } z \in M,
\end{equation*}
where $\mu := (1-\lambda)/(1+\lambda)$.

As $x$ is, by assumption, a forward nonwandering point, there exists
a point $z \in M$ and $t > s$ such that $\phi_{t}z \in M$.  But
\begin{equation*}
\lvert f(\phi_{t}z) \rvert_{\phi_{t}z} = \lvert
D\phi_{t-s}(\phi_{s}z)f(\phi_{s}z) \rvert_{\phi_{t}z} < \lvert
f(\phi_{s}z) \rvert_{\phi_{s}z} \le \lambda \lvert f(z) \rvert_{z} <
(1-\mu) \lvert f(x) \rvert_{x},
\end{equation*}
a contradiction.

For a backward nonwandering point $x$ not being an equilibrium, let
$L \subset X$ be a $C^{1}$ embedded $(n-1)$-dimensional disk
transverse to $f(x)$ and having $x$ in its relative interior.  Pick
$s < 0$, $s < \max\{\sigma(z): z \in L\}$.  Denote
\begin{equation*}
\lambda := \min\{\lvert D\phi_{s}(z)v \rvert_{\phi_{s}(z)}: z \in L,
v \in T_{z}\mathcal{H}, \lvert v \rvert_{z} = 1\}.
\end{equation*}
It is obvious that $\lambda > 1$.  Now, take a $C^{1}$ embedded
$(n-1)$-dimensional disk $M \subset L$ transverse to $f(x)$, having
$x$ in its relative interior and such that
\begin{equation*}
(1-\mu) \lvert f(x) \rvert_{x} < \lvert f(z) \rvert_{z} < (1+\mu)
\lvert f(x) \rvert_{x} \text{ for all } z \in M,
\end{equation*}
where $\mu := (\lambda-1)/(\lambda+1)$.

As $x$ is, by assumption, a backward nonwandering point,
there exists a point $z\in M$ and $t<s$ such that
$\phi_{t}z\in M$.  But
\begin{equation*}
|f(\phi_{t}z)|_{\phi_{t}z}=
|D\phi_{t-s}(\phi_{s}z)f(\phi_{s}z)|_{\phi_{t}z}
>|f(\phi_{s}z)|_{\phi_{s}z}\ge
\lambda|f(z)|_{z}>(1-\mu)|f(x)|_{x},
\end{equation*}
a contradiction.
\end{proof}
\medskip

As a corollary we obtain (see \cite{JM91})
\begin{theorem}
\label{t2}
Let \eqref{eq} be a cooperative irreducible system of ODEs on an open
$X \subset \Rn$ admitting a first integral $H$ of class $C^{1}$ with
positive gradient.  Then
\begin{enumerate}
\item[(i)]
Each $\omega$-limit set is either a singleton or empty.
\item[(ii)]
If $\alpha(x)$ is nonempty then $x$ is an equilibrium.
\end{enumerate}
\end{theorem}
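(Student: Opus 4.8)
The plan is to derive both parts from Theorem~\ref{mt} together with the Finsler contraction $\lvert D\phi_t(x)v\rvert_{\phi_t x}<\lvert v\rvert_x$ (valid for $t\in(0,\tau(x))$ and every $0\ne v\in T_x\mathcal H(x)$) established in the proof of that theorem. Two elementary consequences will drive the argument. First, since $H$ is a first integral we have $\langle\grad H,f\rangle\equiv0$, so $f(z)\in T_z\mathcal H(z)$ for every $z$; combined with the equivariance $D\phi_t(x)f(x)=f(\phi_t x)$, the Finsler speed $g(t):=\lvert f(\phi_t x)\rvert_{\phi_t x}$ satisfies $g(t_2)=\lvert D\phi_{t_2-t_1}(\phi_{t_1}x)f(\phi_{t_1}x)\rvert_{\phi_{t_2}x}<g(t_1)$ whenever $t_1<t_2$ and $f(x)\ne0$, so $g$ is then strictly decreasing and everywhere positive. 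Second, since $\phi_t$ maps each level set of $H$ onto itself and the derivative flow contracts the Finsler length of every tangent vector, the time-$t$ map ($t>0$) is nonexpansive for the distance $d$ induced by the Finsler on a leaf of $\{H=\mathrm{const}\}$.

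For (ii), suppose $\alpha(x)\neq\emptyset$; then $\sigma(x)=-\infty$ and there are $t_k\to-\infty$ with $\phi_{t_k}x\to y$ for some $y\in\alpha(x)$. As an $\alpha$-limit point, $y$ is backward nonwandering (the time-reversed form of the remark that $\omega$-limit points are forward nonwandering), hence by Theorem~\ref{mt} an equilibrium, so $f(y)=0$ and $H(y)=H(x)$ places $y$ on the leaf $\mathcal H(x)$. By continuity of $f$ and of the Finsler, $g(t_k)\to\lvert f(y)\rvert_y=0$. If $f(x)\neq0$ this contradicts the monotonicity of $g$: fixing any $t_0$, for all $t_k<t_0$ we have $g(t_k)>g(t_0)>0$, so $\lim_k g(t_k)\ge g(t_0)>0$. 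Hence $f(x)=0$ and $x$ is an equilibrium.

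For (i), let $\omega(x)\neq\emptyset$ and fix $y\in\omega(x)$, say $\phi_{t_k}x\to y$ with $t_k\to\infty$. Being an $\omega$-limit point, $y$ is forward nonwandering, so by Theorem~\ref{mt} it is an equilibrium, and $H(y)=\lim_k H(\phi_{t_k}x)=H(x)$ puts $y$ and the whole forward orbit of $x$ on the same leaf $\mathcal H(x)$. The idea is to upgrade ``every limit point is a rest point'' to genuine convergence. Because $y$ is a rest point, $d(\phi_t x,y)=d(\phi_t x,\phi_t y)$, and nonexpansiveness of the leafwise flow makes $t\mapsto d(\phi_t x,y)$ nonincreasing; since the subsequence $d(\phi_{t_k}x,y)\to0$, the whole function tends to $0$, so $\phi_t x\to y$ and $\omega(x)=\{y\}$.

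The main obstacle is the convergence step in (i): the leaf $\mathcal H(x)$ need not be path-connected and points on a path joining $\phi_t x$ to $y$ may have finite escape time, so the global distance $d$ and the nonexpansiveness estimate are not immediately legitimate. I would circumvent this by arguing locally near $y$. Nonexpansiveness over one fixed short time shows that a small Finsler ball about the rest point $y$ in $\mathcal H(x)$ is forward invariant—so $y$ is stable within its leaf and the flow is defined for all positive time on that ball. Once $\phi_{t_k}x$ enters this ball the monotonicity of $t\mapsto d(\phi_t x,y)$ is valid there and forces convergence, and local equivalence of the Finsler and Euclidean metrics then yields $\phi_t x\to y$ in the usual sense.
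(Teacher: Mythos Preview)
Your argument for part~(ii) is correct and coincides with the paper's: both show, via Theorem~\ref{mt}, that any $\alpha$-limit point $y$ is an equilibrium, and then exploit the strict decrease of $t\mapsto\lvert f(\phi_t x)\rvert_{\phi_t x}$ (Proposition~2 of \cite{JM91}) to derive a contradiction from $g(t_k)\to 0$ along $t_k\to-\infty$ unless $f(x)=0$.

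For part~(i) the paper is much terser than you: its entire argument is the single observation that each $\omega$-limit point is forward nonwandering, hence an equilibrium by Theorem~\ref{mt}; the passage from ``$\omega(x)$ consists of equilibria'' to ``$\omega(x)$ is a singleton'' is not spelled out and is effectively deferred to \cite{JM91}. You supply this step explicitly, via nonexpansiveness of the leafwise Finsler distance $t\mapsto d(\phi_t x,y)$. Your identification of the obstacle---the leaf need not be globally path-connected, and connecting curves may have finite escape time---is apt, and your local workaround (forward invariance of a small Finsler ball about the equilibrium $y$ in $\mathcal H(x)$, obtained from the contraction over one fixed short time step, so that once $\phi_{t_k}x$ enters it the monotone quantity $d(\phi_t x,y)$ is legitimately defined and forced to $0$) is the right fix. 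This is in fact the mechanism behind the local asymptotic stability of $y$ in $\mathcal H(x)$ that the paper later invokes from \cite{JM91} in the proof of Theorem~\ref{t3}. So your proof is sound; it matches the paper for (ii) and, for (i), makes explicit what the paper leaves to a citation.
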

\begin{proof}
Part (i) follows by the fact that each $\omega$-limit point is
forward nonwandering.  Also, each $\alpha$-limit point is backward
nonwandering.  Assume that for some $x$, $\alpha(x) = \{y\}$.  We
have $0 = \lvert f(y) \rvert_{y} = \lim_{t \to -\infty} \lvert
f(\phi_{t}x) \rvert_{\phi_{t}x} = \sup\{\lvert f(\phi_{t}x)
\rvert_{\phi_{t}x}: t \in (-\infty,\tau(x)) \}$ (see Proposition 2 in
\cite{JM91}), hence $\lvert f(x) \rvert_{x} = 0$, and $x$ is an
equilibrium, therefore $x = y$.
\end{proof}

\subsection{Global Picture}

In the present subsection we shall obtain some insight into the
global nature of the dynamical system restricted to a level set
$\mathcal{H}(x)$ of $H$.  The standing assumption will be:

{\em $X \subset \Rn$ is an open order-convex set such that for any
pair $x$, $y \in X$ their maximum $x \vee y$ and minimum $x \wedge y$
are in $X$.}

For instance, $X = \Rn$, or $X = [[a,b]]$ for some $a$, $b \in \Rn$,
$a \ll b$, or else $X$ is the {\em positive orthant\/} $\Rnplus :=
\{x \in \Rn: x^{i} > 0$ for all $i\}$.

We begin with an auxiliary
\begin{lemma}
\label{l1}
Let $H$ be a $C^{1}$ first integral with positive gradient for a
cooperative irreducible system \eqref{eq}.  Then for each $x \in X$,
$\mathcal{H}(x)$ is connected.
\end{lemma}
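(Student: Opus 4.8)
The plan is to prove the stronger statement that $\mathcal{H}(x)$ is \emph{path}-connected, exploiting the standing lattice hypothesis on $X$ to reduce everything to a single convex order interval on which a monotone retraction onto the level set is globally defined. First I would record the elementary consequence of the positive-gradient hypothesis that $H$ is monotone along the order: if $z \le w$ and the straight segment $[z,w]$ lies in $X$, then the derivative of $s \mapsto H(z + s(w-z))$ equals $\langle \grad H, w-z\rangle \ge 0$ (the integrand being nonnegative since $\grad H \gg 0$ and $w - z \ge 0$), so $H(z) \le H(w)$, strictly when $z \ne w$. Order-convexity of $X$ guarantees such segments stay in $X$ whenever $z$ and $w$ are comparable.

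Fix $p, q \in \mathcal{H}(x)$, so that $H(p) = H(q) = c := H(x)$, and set $m := p \wedge q$ and $M := p \vee q$. By the standing hypothesis both $m, M \in X$, and by order-convexity the entire closed order interval $B := [m,M]$ lies in $X$. This is the crucial reduction: $B$ is a \emph{convex} compact subset of $X$ containing $p$ and $q$, so any segment drawn inside $B$ automatically remains in $X$. The two-segment ``tent'' path $p \to M \to q$ stays in $B$, and along it every point dominates either $p$ or $q$, hence has $H$-value $\ge c$ by the monotonicity recorded above.

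Next I would construct a retraction of $\{y \in B : H(y) \ge c\}$ onto $\mathcal{H}(x)$ by pushing toward the bottom vertex $m$. For such a $y$ the segment $s \mapsto y + s(m-y)$, $s \in [0,1]$, lies in $B$ and carries $H$ strictly downward from $H(y) \ge c$ to $H(m) \le c$, since the direction $m - y \le 0$ forces the derivative $\langle \grad H, m - y\rangle < 0$ unless $y = m$. Hence there is a unique $s^{*}(y) \in [0,1]$ with $H\bigl(y + s^{*}(y)(m-y)\bigr) = c$, and I set $r(y) := y + s^{*}(y)(m-y) \in \mathcal{H}(x)$. Composing $r$ with the tent gives a continuous path in $\mathcal{H}(x)$ joining $r(p) = p$ to $r(q) = q$ (both endpoints are fixed by $r$ because their $H$-value is already $c$), which establishes path-connectedness.

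The only genuinely technical point is the continuity of $s^{*}$, and hence of $r$: on the region where $0 < s^{*} < 1$ it follows from the implicit function theorem, since $\partial/\partial s\, H(y + s(m-y)) < 0$ there, while at the two boundary values $s^{*} \in \{0,1\}$ it follows from a direct monotonicity argument; I regard this as routine. The conceptual heart of the proof—and the step that would break down without the assumption that $X$ is closed under $\wedge$ and $\vee$—is the reduction to the convex order interval $B$, which is precisely what prevents the monotone push-down from ever leaving $X$.
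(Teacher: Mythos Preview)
Your argument is correct and rests on the same reduction as the paper's: pass to the order interval $[p\wedge q,\, p\vee q]\subset X$ via the standing lattice hypothesis and use that $\grad H\gg 0$ makes $H$ strictly increasing along order segments. The paper then restricts $H$ to the two-dimensional parallelogram with vertices $p$, $q$, $p\wedge q$, $p\vee q$ (these four points are coplanar since $p+q=p\wedge q+p\vee q$) and invokes the implicit function theorem to conclude that the level set meets this parallelogram in a $C^{1}$ arc from $p$ to $q$. Your retraction of the tent $p\to p\vee q\to q$ toward $p\wedge q$ is a more explicit and slightly more elementary way of producing the same curve: the tent and the push-down direction both lie in that two-plane, so your path in fact coincides with the paper's arc. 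What you gain is that only the intermediate value theorem is needed for the continuity of $s^{*}$; what the paper's route gives for free is that the connecting curve is $C^{1}$, though that extra regularity is not used later.
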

\begin{proof}
Fix $x \in X$, and write $\mathcal{H} := \mathcal{H}(x)$.  Take $y$,
$z \in \mathcal{H}$, $y\ne z$.  As $y+z = y \wedge z {} + {} y \vee
z$, it follows that the points $y$, $z$, $y\wedge z$ and $y\vee z$
belong to a two-dimensional affine subspace $V$.  We have $H(y \wedge
z) < H(x)$ and $H(y \vee z) > H(x)$.  For each $\lambda \in [0,1]$
denote by $A_{\lambda}$ the union of the line segment joining $y \vee
z$ with ${\lambda}y + (1-\lambda)z$ and the line segment joining $y
\wedge z$ with ${\lambda}y + (1-\lambda)z$.  The set
$A:=\bigcup_{\lambda\in[0,1]}A_{\lambda}$ is a two-dimensional
(analytic) submanifold-with-corners contained in $V$.  It is apparent
that the gradient of the restriction $H|A$ is everywhere nonzero. The
implicit function theorem yields that the set $A \cap \mathcal{H}$ is
$C^{1}$ diffeomorphic to the real interval $[0,1]$. This finishes the
proof.
\end{proof}
\medskip

As in the case of a Riemannian metric, for a $C^{1}$ curve $\gamma$
contained in $\mathcal{H}$ define the {\em length\/} of $\gamma$ as
\begin{equation*}
\ell(\gamma):=
\int\limits_{a}^{b}|\gamma'(s)|_{\gamma(s)}\,ds,
\end{equation*}
where $\gamma \colon [a,b] \to \mathcal{H}$ is a parametrization of
the curve. It is straightforward that the length does not depend on a
parametrization.  We define the {\em Finsler distance\/} $d(x,y)$
between two points $x$, $y \in \mathcal{H}$ as the infimum of the
lengths of all $C^{1}$ curves with endpoints $x$ and $y$.  The
$C^{1}$ manifold $\mathcal{H}$ together with the Finsler distance
$d(\cdot,\cdot)$ is a metric space.

\begin{theorem}
\label{t3}
Let a cooperative irreducible system \eqref{eq} admit a first
integral $H$ of class $C^{1}$ with positive gradient.  Assume that
$\mathcal{H}$ is a level set of $H$ such that $\tau(x) = \infty$ for
all $x \in \mathcal{H}$.  Then either
\begin{enumerate}
\item[(a)]
There is precisely one equilibrium $y$ in $\mathcal{H}$, and $y$
is a global attractor in $\mathcal{H}$;
\end{enumerate}
or
\begin{enumerate}
\item[(b)]
There is no equilibrium in $\mathcal{H}$, and for each $x \in
\mathcal{H}$ one has $\omega(x) = \emptyset$.
\end{enumerate}
\end{theorem}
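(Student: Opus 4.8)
The plan is to exploit the two structural facts recalled above: the time-$t$ map $\phi_{t}$ carries the level set $\mathcal{H}$ into itself (because $H$ is a first integral, so $H\circ\phi_{t}=H$), and it strictly shortens every nonconstant $C^{1}$ curve in $\mathcal{H}$. Indeed, Proposition~2 of \cite{JM91} gives $\lvert D\phi_{t}(z)v\rvert_{\phi_{t}z}<\lvert v\rvert_{z}$ for $t>0$ and $0\ne v\in T_{z}\mathcal{H}$, whence $\ell(\phi_{t}\circ\gamma)<\ell(\gamma)$ for every nonconstant $C^{1}$ curve $\gamma\subset\mathcal{H}$; passing to infima, $\phi_{t}$ is nonexpansive for the Finsler distance, $d(\phi_{t}a,\phi_{t}b)\le d(a,b)$ for all $a,b\in\mathcal{H}$ and $t\ge 0$. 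I would then argue by the dichotomy on whether $\mathcal{H}$ contains an equilibrium, which is natural since the $\phi_{t}$-fixed points in $\mathcal{H}$ are exactly the equilibria lying on $\mathcal{H}$.

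The alternative (b) is the quick one. Suppose $\mathcal{H}$ has no equilibrium and fix $x\in\mathcal{H}$. Since $\mathcal{H}=H^{-1}(H(x))$ is closed in $X$ and the whole forward orbit of $x$ stays in $\mathcal{H}$, we have $\omega(x)\subset\mathcal{H}$. By Theorem~\ref{t2}(i) the set $\omega(x)$ is empty or a singleton $\{p\}$; in the latter case $p$ is an $\omega$-limit point, hence forward nonwandering, hence by Theorem~\ref{mt} an equilibrium lying in $\mathcal{H}$, contradicting the assumption. Thus $\omega(x)=\emptyset$, which is exactly (b).

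For the alternative (a), suppose $y\in\mathcal{H}$ is an equilibrium. Global attraction reduces to two claims: (I) $y$ is the only equilibrium in $\mathcal{H}$, and (II) $\omega(x)\ne\emptyset$ for every $x\in\mathcal{H}$. Granting (II), Theorem~\ref{t2}(i) makes $\omega(x)$ a singleton $\{p\}$ with $p\in\mathcal{H}$ an equilibrium (Theorem~\ref{mt}), and (I) forces $p=y$, so $\phi_{t}x\to y$. To get (II) I would use that $t\mapsto d(\phi_{t}x,y)=d(\phi_{t}x,\phi_{t}y)$ is nonincreasing, so the forward orbit stays in the closed ball $\bar{B}_{d}(y,d(x,y))$; if this ball is relatively compact in $X$, some subsequence $\phi_{t_{k}}x$ converges and $\omega(x)\ne\emptyset$. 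For (I), if $y_{1}\ne y_{2}$ were distinct equilibria, then every nonconstant curve $\gamma$ joining them (such curves exist with finite length by Lemma~\ref{l1}) satisfies $d(y_{1},y_{2})\le\ell(\phi_{t}\gamma)<\ell(\gamma)$; taking $\gamma$ nearly minimizing and confined to a compact set on which $\phi_{t}$ contracts lengths by a uniform factor $\lambda<1$ gives $d(y_{1},y_{2})\le\lambda\,d(y_{1},y_{2})$, forcing $d(y_{1},y_{2})=0$, a contradiction.

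The hard part, on which both (I) and (II) rest, is a metric-geometric statement: that $d$-bounded subsets of $\mathcal{H}$ are relatively compact in $X$ (equivalently, that $(\mathcal{H},d)$ is complete and locally compact, so that near-minimizing curves remain in a compact region and the strict per-curve contraction can be upgraded to a strict contraction of the infimal distance $d$). This is delicate precisely because $\mathcal{H}$ need not be closed in $\Rn$, so a priori a $d$-bounded orbit could accumulate on $\bd X$. I would attack it through the explicit shape of the canonical Finsler: writing $A_{z}=\{u\ge 0:\langle\grad{H(z)},u\rangle=1\}$ for the slice of the cone, one finds that on $T_{z}\mathcal{H}$ the norm is the weighted $\ell^{1}$ expression $\lvert v\rvert_{z}=\tfrac{1}{2}\sum_{i}\partial_{i}H(z)\,\lvert v^{i}\rvert$, and combining this with the order-convexity of $X$ and closure under $\vee$ and $\wedge$ should control how far a curve of bounded length can travel and keep it off $\bd X$. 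This properness is in any case implicit in the analysis of \cite{JM91}; isolating it is the one point demanding genuine work, the remaining deductions being the soft arguments sketched above.
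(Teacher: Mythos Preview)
Your argument for alternative (b) is correct and essentially the paper's. The difficulty is entirely in (a), and there your proposal has a real gap at precisely the point you flag.

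Both (I) and (II) in your scheme rest on the claim that $d$-bounded subsets of $\mathcal{H}$ are relatively compact in $X$ (equivalently, that $(\mathcal{H},d)$ is proper). You call this ``implicit in \cite{JM91}'', but it is not: the only place in the paper where any comparison of $\lvert\cdot\rvert_{x}$ with the Euclidean norm is carried out is Theorem~\ref{t4}, and there the extra hypothesis that the components of $\grad H$ are uniformly bounded and bounded away from zero is used in an essential way. Without such a bound there is no reason a $d$-ball around $y$ should avoid $\partial X$ or stay Euclidean-bounded, so your step (II) does not go through. Your uniqueness step (I) inherits the same defect: to pass from strict contraction of individual curve lengths to a strict inequality $d(y_{1},y_{2})<d(y_{1},y_{2})$ you need near-minimizers to sit in a fixed compact, which again is the unproved properness.

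The paper's proof sidesteps properness entirely, using the Finsler only in a local, soft way. For uniqueness it abandons the metric altogether and argues through the order: if $y\ne y_{1}$ are equilibria in $\mathcal{H}$, set $z:=y\vee y_{1}$; strong monotonicity (Theorem~\ref{coop_irr}) gives $\phi_{t}z\gg y$ and $\phi_{t}z\gg y_{1}$, hence $\phi_{t}z\gg y\vee y_{1}=z$, so two $\ll$-related points lie on the same level set of $H$, contradicting $\grad H\gg 0$. For global attraction the paper quotes the Main Theorem of \cite{JM91} that $y$ is locally exponentially asymptotically stable in $\mathcal{H}$; this makes the basin $A:=\{x\in\mathcal{H}:\omega(x)=\{y\}\}$ relatively open, and by Conley's theory there is a compact forward-invariant relative neighborhood $B\subset A$ of $y$. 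If $A\ne\mathcal{H}$, connectedness (Lemma~\ref{l1}) gives a boundary point $z\in\bd_{\mathcal{H}}A$. By Theorem~\ref{mt} one has $\omega(z)=\emptyset$. But choosing $x_{1}\in A$ with $d(x_{1},z)<\epsilon$ and $T$ with $d(\phi_{T}x_{1},y)<\epsilon$, nonexpansiveness gives $d(\phi_{T}z,y)<2\epsilon$, so $\phi_{T}z\in B\subset A$ and $\omega(z)=\{y\}$, a contradiction. Note that here nonexpansiveness is used once, over a fixed finite time and only to land inside the already-supplied compact $B$; no global control of $d$-balls is required.
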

\begin{proof}
As a consequence of Theorem~\ref{mt}, if there is no equilibrium in
$\mathcal{H}$ then $\omega(x) = \emptyset$ for all $x \in
\mathcal{H}$.  So, assume $y \in \mathcal{H}$ is an equilibrium.
First, we claim that $y$ is a unique equilibrium in $\mathcal{H}$.
Suppose {\em per contra\/} that there is another equilibrium $y_{1}$.
Put $z: = y \vee y_{1}$.  We have $z > y$, $z > y_{1}$.  By strong
monotonicity, for each $t$, $0 < t < \tau(z)$, one has $\phi_{t}z \gg
\phi_{t}y = y$ and $\phi_{t}z \gg \phi_{t}y_{1} = y_{1}$.  But this
implies that $\phi_{t}z \gg z$, that is, in the level set of $H$
passing through $z$ there are two points being in the $\ll$ relation,
which is impossible.

By Main Theorem in \cite{JM91}, $y$ is (locally) exponentially
asymptotically stable relative to $\mathcal{H}$. Consequently, the
set $A: = \{x \in \mathcal{H}: \omega(x) = \{y\}\}$ is relatively
open in $\mathcal{H}$.  Suppose by way of contradiction that $A \ne
\mathcal{H}$.  Since by Lemma~\ref{l1} $\mathcal{H}$ is connected,
the relative boundary $\bd_{\mathcal{H}}A$ is nonempty. Pick a point
$z \in \bd_{\mathcal{H}}A$.  As $z \notin A$, we have $\omega(z) =
\emptyset$ by Theorem~\ref{mt}.

Because $y$ is asymptotically stable in $\mathcal{H}$, by \cite{C}
there is a compact relative neighborhood $B$ of $y$ in $\mathcal{H}$
such that $B\subset A$ and $\phi_{t}B \subset B$ for all $t\ge0$.
Take $\epsilon>0$ so small that $\{x \in \mathcal{H}: d(x,y) \le 2
\epsilon\}$ is contained in the relative interior of $B$. Pick $x_{1}
\in A$ with $d(x_{1},z) < \epsilon$. Let $T > 0$ be such that
$d(\phi_{T}x_{1},y) < \epsilon$.  As $\phi_{T}z$ exists, we must have
$d(\phi_{T}x_{1},\phi_{T}z) < \epsilon$. Consequently,
$d(\phi_{T}z,y) < 2 \epsilon$.  But from this it follows that
$\phi_{T}z \in B \subset A$, hence $\omega(z) = \{y\}$. This
contradiction completes the proof.
\end{proof}

\medskip
If $X = \Rn$, a well-known condition guaranteeing $\tau(x) = \infty$
for each $x$ is the existence of positive constants $C_{1}$ and
$C_{2}$ such that $\lVert f(x) \rVert \le C_{1} \lVert x \rVert +
C_{2}$. For another condition see the following result.
\begin{theorem}
\label{t4}
Let a cooperative irreducible system \eqref{eq} defined on $\Rn$
admit a first integral $H$ of class $C^{1}$, such that all the
coordinates of $\grad{H(x)}$ are positive, bounded and bounded away
from zero, uniformly in $x \in X$. Then we have either
\begin{enumerate}
\item[(a)]
For each $x \in \Rn$, $\omega(x)$ is a singleton.  Moreover, the
set of equilibria is simply ordered by $\ll$;
\end{enumerate}
or
\begin{enumerate}
\item[(b)]
For each $x \in \Rn$, $\omega(x) = \emptyset$.
\end{enumerate}
\end{theorem}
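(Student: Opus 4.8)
The plan is to establish a dichotomy governed by whether the set of equilibria in $\Rn$ is empty. The hypotheses on $\grad{H}$ serve two roles: they guarantee completeness of forward trajectories, and they force the level sets $\mathcal{H}(x)$ to be ``uniform'' enough that equilibria on different level sets can be compared. First I would verify that $\tau(x)=\infty$ for every $x\in\Rn$. Since each coordinate of $\grad{H}$ is bounded away from zero and $H$ is constant on orbits, a forward trajectory cannot escape to infinity in finite time without violating boundedness of $H$ along the orbit; more carefully, the positive lower bound on the components of $\grad{H}$ confines the orbit to a region where $H$ is fixed, and the upper bound prevents $H$ from changing faster than linearly in the coordinates, so the orbit stays in a slab of bounded ``$H$-width.'' This places us squarely in the setting of Theorem~\ref{t3}, applied level set by level set.

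Next I would dispose of the two cases of Theorem~\ref{t3} globally. By that theorem, each level set $\mathcal{H}$ either contains a unique equilibrium that is a global attractor within $\mathcal{H}$, or contains no equilibrium and has $\omega(x)=\emptyset$ throughout. The crux is to show these alternatives cannot be mixed across level sets: I claim that if one level set is equilibrium-free then all are. Suppose $\mathcal{H}(x_0)$ contains an equilibrium $y_0$ but some nearby level set is equilibrium-free. I would argue by a continuity/persistence mechanism: the equilibrium $y_0$ is hyperbolic relative to $\mathcal{H}(x_0)$ (by the exponential stability from the Main Theorem in \cite{JM91}), and transverse to the foliation it inherits a direction along $f$-flow-invariant structure. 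Using the implicit function theorem applied to $f=0$ together with the transversality of $\grad{H}$ to the $(n-1)$-dimensional tangent spaces $T\mathcal{H}$, the zero set of $f$ meets each nearby level set in exactly one point, producing a $C^{1}$ curve of equilibria crossing level sets. This yields case (a) on an open set of level values; the complementary set of values carrying no equilibrium is also open by a similar argument, so by connectedness of the range of $H$ one of the two alternatives holds uniformly.

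In case (a), I must additionally show the equilibria are totally ordered by $\ll$. Let $y_1$, $y_2$ be equilibria on distinct level sets, say $H(y_1)<H(y_2)$. They cannot be unordered: if neither $y_1\le y_2$ nor $y_2\le y_1$, form $w:=y_1\vee y_2$, which lies in $X$ by the standing assumption and satisfies $w>y_1$ and $w>y_2$ strictly. By strong monotonicity (Theorem~\ref{coop_irr}), $\phi_t w\gg\phi_t y_i=y_i$ for $t>0$; but $w$ lies on the level set through $\phi_t w$, and I would track that $\phi_t w$ converges to the unique equilibrium on its own level set while remaining $\gg$ both $y_1$ and $y_2$, forcing that limiting equilibrium to dominate $y_1\vee y_2$ strictly in every coordinate. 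Comparing first-integral values, $H(y_1)<H(w)$ forces the attractor on $\mathcal{H}(w)$ to sit strictly above $y_1$; iterating the comparison between the two equilibria themselves and using that distinct equilibria lie on distinct level sets (since each level set has at most one, by Theorem~\ref{t3}(a)) rules out incomparability, leaving $y_1\ll y_2$ after invoking irreducibility to upgrade $\le$ to $\ll$.

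The main obstacle will be the persistence-of-equilibria step that glues the level-wise dichotomy of Theorem~\ref{t3} into a global one. The difficulty is that Theorem~\ref{t3} is stated for a \emph{fixed} level set and says nothing about how the equilibrium (or its absence) varies as the level value changes; bridging this requires the uniform bounds on $\grad{H}$ to control the foliation's geometry and to run the implicit function theorem with estimates that are uniform across level sets. Establishing that the set of level values admitting an equilibrium is both open and closed in the connected range $H(\Rn)$ is where the real work lies, and it is precisely here that dropping either the upper bound or the lower bound on the coordinates of $\grad{H}$ would break the argument.
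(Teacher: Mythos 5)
Your proposal breaks down at two essential steps, and in both places the paper's argument relies on machinery your sketch does not supply.

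First, the completeness step $\tau(x)=\infty$ is not established by your ``slab of bounded $H$-width'' reasoning. Confinement to a level set of $H$ gives no bound on $\lVert \phi_{t}x \rVert$: a level set is an unbounded $(n-1)$-dimensional manifold (for $H(x)=\sum_{i}x^{i}$ it is a hyperplane), so a trajectory can a priori escape to infinity \emph{along} its level set in finite time, and nothing in your argument excludes this. The paper's proof of completeness is the technical heart of the theorem and uses the Finsler in an essential way: one first proves a uniform estimate $\lVert v \rVert \le C \lvert v \rvert_{x}$ for all $x \in \Rn$ and all $v$ tangent to $\mathcal{H}(x)$ --- this is exactly where boundedness and bounded-away-from-zero of the coordinates of $\grad{H}$ enter, via explicit estimates on the unit ball $A_{x}-A_{x}$ --- and then observes that $t \mapsto \lvert f(\phi_{t}x) \rvert_{\phi_{t}x}$ is nonincreasing by the contraction property (Proposition 2 of \cite{JM91}), so if $T:=\tau(x)<\infty$ then $\int_{0}^{T}\lVert f(\phi_{t}x)\rVert\,dt \le C\int_{0}^{T}\lvert f(\phi_{t}x)\rvert_{\phi_{t}x}\,dt<\infty$, the orbit has finite Euclidean length, $\lim_{t\to T}\phi_{t}x$ exists, and the solution extends, a contradiction. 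Your proposal contains no substitute for this mechanism.

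Second, your gluing of the level-wise dichotomy via an open-and-closed argument has a genuine gap, and the paper proceeds quite differently. Openness is already delicate: you cannot apply the implicit function theorem to $f=0$ as a map on $\Rn$, because $Df(y_{0})$ is singular at any equilibrium $y_{0}$ (differentiating $H(\phi_{t}x)=H(x)$ in $x$ at $x=y_{0}$ gives $Df(y_{0})^{T}\grad{H(y_{0})}=0$), so you would have to restrict to $T_{y_{0}}\mathcal{H}$ and justify that exponential stability makes that restriction invertible. But the fatal step is closedness, which you wave off as ``a similar argument'': if levels $c_{k}\to c_{0}$ carry equilibria $y_{k}$, the $y_{k}$ may escape to infinity --- level sets are noncompact --- so no equilibrium on $H^{-1}(c_{0})$ is produced, and connectedness of $H(\Rn)$ then gives nothing. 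The paper avoids persistence arguments entirely: given one equilibrium, normalized to $0$ with $H(0)=0$, for each $r>0$ the set $\{z \in H^{-1}(r): z \ge 0\}$ is compact (the lower bound on the coordinates of $\grad{H}$ forces $\sum_{i}z^{i}\le r/m$), is homeomorphic to an $(n-1)$-cell by radial projection, and is forward invariant (monotonicity plus $\phi_{t}0=0$ and conservation of $H$); Brouwer's fixed point theorem then yields an equilibrium in \emph{every} level set, and Theorem~\ref{t3} gives case (a) globally. This is a global argument requiring no hyperbolicity and no limit of equilibria. Your final step, the ordering of equilibria, is essentially sound though more convoluted than needed: for unordered equilibria $y_{1},y_{2}$, strong monotonicity gives $\phi_{t}(y_{1}\vee y_{2}) \gg y_{1}\vee y_{2}$, which immediately contradicts constancy of the strictly monotone $H$; the paper simply cites Proposition 2.1 of \cite{JM87}.
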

\begin{proof}
We begin by showing that there is a constant $C > 0$ such that
$\lVert v \rVert \le C \lvert v \rvert_{x}$ for each $x \in \Rn$ and
each vector $v$ tangent at $x$ to $\mathcal{H}(x)$.  Fix $x \in \Rn$,
and put
\begin{equation*}
G(x) := \frac{\grad{H(x)}}{\lVert \grad{H(x)} \rVert^{2}}.
\end{equation*}
Recall that in the construction of the Finsler $\lvert \cdot \rvert$
the unit ball $B_{x}$ is defined as $A_{x} - A_{x}$, where $A_{x} :=
\{v \ge 0: \langle G(x), v \rangle = 1\}$.  In particular, $G(x) \in
A_{x}$. Fix $v \in B_{x}$, that is, $|v|_{x} \le 1$.  Write $v =
v_{1} - v_{2}$, where $v_{1}$, $v_{2} \in A_{x}$.  For $i = 1$, $2$,
put $w_{i} := v_{i} - G(x)$.  Of course, $v = w_{1} - w_{2}$.  As
$v_{i} \ge 0$, we have $-w_{i} = G(x) - v_{i} \le G(x)$.  On the
other hand, $\langle G(x), -w_{i} \rangle$ is easily seen to be zero.
Write $G(x) = (a_{1},\dots,a_{n})$, $-w_{1} = (b_{1},\dots,b_{n})$.
We have $a_{j} > 0$, $b_{j} \le a_{j}$ and $\sum_{j=1}^{n} a_{j}
b_{j} = 0$. Define
\begin{equation*}
c_{j} := \frac{1}{a_{j} \lVert \grad{H(x)} \rVert^{2}} - a_{j}.
\end{equation*}
It is straightforward that all $c_{j}$'s are positive and
bounded uniformly in $x\in\Rn$.  We claim that
\begin{equation*}
-c_{j} \le b_{j} \le c_{j}
\end{equation*}
for each $1\le j\le n$.  Indeed, suppose $b_{k} > c_{k}$ for some
$k$. We have then
\begin{equation*}
a_{k}b_{k} > \frac{1}{\lVert \grad{H(x)} \rVert^{2}} - a_{k}^{2} =
\lVert G(x) \rVert^{2} - a_{k}^{2} = \sum_{\substack{j=1\\ j \ne
k}}^{n} a_{j}^{2} \ge \sum_{\substack{j=1\\ j\ne k}}^{n} a_{j} b_{j},
\end{equation*}
hence $\sum_{j=1}^{n} a_{j} b_{j} > 0$, a contradiction.  The other
inequality is proved by a similar argument.  We have thus obtained
that $\lVert v \rVert \le \lVert w_{1} \rVert + \lVert w_{2} \rVert$
does not exceed a constant independent of $x \in \Rn$.

Now suppose by way of contradiction that $T := \tau(x) < \infty$ for
some $x \in \Rn$.  Then the improper integral $\int_{0}^{T} \lvert
f(\phi_{t}x) \rvert_{\phi_{t}x} \, dt$ is convergent, from which it
follows in a standard way that the finite limit $\lim_{t \to T}
\phi_{t}x$ exists, a contradiction.

Assume that there exists an equilibrium $y\in\Rn$.  We may assume $y
= 0$ and $H(0) = 0$.  Take a positive real number $r$.  Let $\epsilon
> 0$ be such that for each $x$ with $\lVert x \rVert \le \epsilon$ one has
$H(x) < r/2$. Since the coefficients of $\grad{H}$ are positive and
bounded away from zero, for each $x > 0$, $\lVert x \rVert =
\epsilon$, the half line $\{sx: s \ge 1 \}$ intersects the level set
$H^{-1}(r) := \{z \in \Rn: H(z) = r\}$ at precisely one point $M(x)$.
It is easy to see that the mapping $M$ is a homeomorphism of $\{x \in
\Rn: x \ge 0, \ \lVert x \rVert = \epsilon \}$ onto $\{z \in
H^{-1}(r): z \ge 0 \}$.  As the latter set is forward invariant, a
well-known application of Brouwer's fixed point theorem implies that
there exists an equilibrium in $H^{-1}(r)$.  An analogous argument
applies to the case $r<0$.  From Theorem~\ref{t3} we deduce that
$\omega(x)$ is a singleton for any $x \in \Rn$.  The fact that the
set of equilibria is simply ordered follows by Proposition 2.1 in
\cite{JM87}.
\end{proof}

\end{document}